\newtheorem{thm}{Theorem}
\theoremstyle{remark}
\newtheorem{exm}{Example}[section]
\begin{document}
{

\begin{center}
{\Large\bf
The strong matrix Stieltjes moment problem.}
\end{center}

\begin{center}
{\bf A.E. Choque Rivero, S.M. Zagorodnyuk}
\end{center}

\section{Introduction.}
In this paper we analyze the following problem:
to find a non-decreasing matrix function
$M(x) = ( m_{k,l}(x) )_{k,l=0}^{N-1}$, on $\mathbb{R}_+ = [0,+\infty)$, $M(0)=0$,
which is left-continuous on $(0,+\infty)$, and such that
\begin{equation}
\label{f1_1}
\int_{ \mathbb{R}_+ } x^n dM(x) = S_n,\qquad n\in \mathbb{Z},
\end{equation}
where $\{ S_n \}_{n\in \mathbb{Z}}$ is a prescribed sequence of Hermitian $(N\times N)$ complex matrices (moments),
$N\in \mathbb{N}$. This problem is said to be a {\bf strong matrix Stieltjes moment problem}.
The problem is said to be {\bf determinate} if it has a unique solution and
{\bf indeterminate} in the opposite case.

\noindent
The scalar ($N=1$) strong Stieltjes moment problem (in a slightly different setting) was
introduced in 1980 by Jones, Thron and Waadeland~\cite{cit_10_JTW}.
Necessary and sufficient conditions for the existence of a solution with an infinite number of
points of increase and for the uniqueness of such a solution were established~\cite[Theorem 6.3]{cit_10_JTW}.
Also necessary and sufficient conditions for the existence of a unique solution with a finite
number of points of increase were obtained~\cite[Theorem 5.2]{cit_10_JTW}.
The approach of Jones, Thron and Waadeland's investigation was made through the study of
special continued fractions related to the moments.

\noindent
In 1995, Nj\aa stad described some classes of solutions of the scalar strong Stieltjes moment
problem~\cite{cit_15_N},\cite{cit_30_N}. He used properties of some associated Laurent polynomials.

\noindent
In 1996, Kats and Nudelman obtained necessary and sufficient conditions for the existence of a solution
of the scalar strong Stieltjes moment problem~\cite[Theorem 1.1]{cit_35_KN}.
The degenerate case was studied in full:
in this case the solution is unique, given explicitly and it has a finite number of points of increase.
In the non-degenerate case, conditions for the determinacy were given and the unique solution was
presented. In the (non-degenerate) indeterminate case a Nevanlinna-type parameterization for all solutions
of the scalar strong Stieltjes moment problem was obtained~\cite[Theorem 4.1]{cit_35_KN}.
Canonical solutions and Weyl-type lunes were studied, as well.
Kats and Nudelman used the results of Krein on the semi-infinite string theory.

\noindent
Various other results on the scalar strong Stieltjes moment problem can be found in
papers~\cite{cit_40_N},\cite{cit_50_JTW},\cite{cit_60_JT},\cite{cit_65_N},\cite{cit_67_N}
(see also References therein).

\noindent
The moment problem~(\ref{f1_1}) where the half-axis $\mathbb{R}_+$ is replaced by the whole
axis $\mathbb{R}$ is said to be the {\bf strong matrix Hamburger moment problem}.
The scalar ($N=1$) strong matrix Hamburger moment problem has been intensively studied since 1980-th,
see a survey~\cite{cit_60_JT}, a recent paper~\cite{cit_70_BD} and References therein.
For the matrix case, see papers~\cite{cit_80_S},\cite{cit_90_Z} and papers cited there.

The aim of our present investigation is threefold. Firstly, we obtain necessary and sufficient
conditions for the solvability of the strong matrix Stieltjes moment problem~(\ref{f1_1}).
Consider the following block matrices constructed by moments:
\begin{equation}
\label{f1_2}
\Gamma_n = (S_{i+j})_{i,j=-n}^n =
\left(
\begin{array}{ccccc} S_{-2n} & \ldots & S_{-n} & \ldots & S_0\\
\vdots &  & \vdots &  & \vdots\\
S_{-n} & \ldots & S_{0} & \ldots & S_n\\
\vdots &  & \vdots &  & \vdots\\
S_{0} & \ldots & S_{n} & \ldots & S_{2n}\end{array}
\right),
\end{equation}
\begin{equation}
\label{f1_2_1}
\widetilde\Gamma_n = (S_{i+j+1})_{i,j=-n}^n =
\left(
\begin{array}{ccccc} S_{-2n+1} & \ldots & S_{-n+1} & \ldots & S_1\\
\vdots &  & \vdots &  & \vdots\\
S_{-n+1} & \ldots & S_{1} & \ldots & S_{n+1}\\
\vdots &  & \vdots &  & \vdots\\
S_{1} & \ldots & S_{n+1} & \ldots & S_{2n+1}\end{array}
\right),\qquad n\in \mathbb{Z}.
\end{equation}
We shall prove that conditions
\begin{equation}
\label{f1_3}
\Gamma_n\geq 0,\quad \widetilde\Gamma_n\geq 0\qquad n=0,1,2,...,
\end{equation}
are necessary and sufficient for the solvability of the moment problem~(\ref{f1_1}).

Secondly, we obtain an analytic description of all solutions of the moment problem~(\ref{f1_1}).
We shall use an abstract operator approach similar to the "pure operator"$~$approach of
Sz\"okefalvi-Nagy and Koranyi to the Nevanlinna-Pick interpolation problem,
see~\cite{cit_100_SK},\cite{cit_200_SK}.
We shall need some properties of generalized $\Pi$-resolvents of non-negative operators and
generalized $sc$-resolvents of Hermitian contractions, established by Krein and Ovcharenko
in~\cite{cit_300_Kr},\cite{cit_400_KO}.
As a by-product, we present a description of generalized $\Pi$-resolvents of a non-negative operator which
does not use improper elements or relations as it was done in the original work of Krein~\cite{cit_500_Kr}
and in the paper of Derkach and Malamud~\cite{cit_600_DM}. Here we adapt some ideas from~\cite{cit_700_Ch} of
Chumakin who studied generalized resolvents of isometric operators.

Thirdly, we obtain necessary and sufficient conditions for the strong matrix Stieltjes moment
problem to be determinate.

{\bf Notations. }
As usual, we denote by $\mathbb{R}, \mathbb{C}, \mathbb{N}, \mathbb{Z}, \mathbb{Z}_+$,
the sets of real numbers, complex numbers, positive integers, integers and non-negative integers,
respectively.
The space of $n$-dimensional complex vectors $a = (a_0,a_1,\ldots,a_{n-1})$, we denote by
$\mathbb{C}^n$, $n\in \mathbb{N}$.
If $a\in \mathbb{C}^n$, then $a^*$ means the complex conjugate vector.
By $\mathbb{P}_L$ we  denote the space of all complex Laurent polynomials, i.e. functions
$\sum_{k=a}^b \alpha_k x^k$, $a,b\in \mathbb{Z}$: $a\leq b$, $\alpha_k\in \mathbb{C}$.

\noindent
Let $M(x)$ be a left-continuous non-decreasing matrix function $M(x) = ( m_{k,l}(x) )_{k,l=0}^{N-1}$
on $\mathbb{R}$, $M(-\infty)=0$, and $\tau_M (x) := \sum_{k=0}^{N-1} m_{k,k} (x)$;
$\Psi(x) = ( dm_{k,l}/ d\tau_M )_{k,l=0}^{N-1}$.
By $L^2(M)$ we denote a set (of classes of equivalence)
of vector-valued functions $f: \mathbb{R}\rightarrow \mathbb{C}^N$,
$f = (f_0,f_1,\ldots,f_{N-1})$, such that (see, e.g.,~\cite{cit_800_M})
$$ \| f \|^2_{L^2(M)} := \int_\mathbb{R}  f(x) \Psi(x) f^*(x) d\tau_M (x) < \infty. $$
The space $L^2(M)$ is a Hilbert space with a scalar product
$$ ( f,g )_{L^2(M)} := \int_\mathbb{R}  f(x) \Psi(x) g^*(x) d\tau_M (x),\qquad f,g\in L^2(M). $$
We denote $\vec e_k = (\delta_{0,k},\delta_{1,k},...,\delta_{N-1,k})$, $0\leq k\leq N-1$,
where $\delta_{j,k}$ is the Kronecker delta.

If H is a Hilbert space then $(\cdot,\cdot)_H$ and $\| \cdot \|_H$ mean
the scalar product and the norm in $H$, respectively.
Indices may be omitted in obvious cases.
For a linear operator $A$ in $H$, we denote by $D(A)$
its  domain, by $R(A)$ its range, by $\mathop{\rm Ker}\nolimits A$
its kernel, and $A^*$ means the adjoint operator
if it exists. If $A$ is invertible then $A^{-1}$ means its
inverse. $\overline{A}$ means the closure of the operator, if the
operator is closable.
If $A$ is self-adjoint, by $R_z(A)$ we denote the resolvent of $A$, $z\in \mathbb{C}\backslash \mathbb{R}$.
If $A$ is bounded then $\| A \|$ denotes its
norm.
For an arbitrary set of elements $\{ x_n \}_{n\in \mathbb{Z}}$ in
$H$, we denote by $\mathop{\rm Lin}\nolimits\{ x_n \}_{n\in \mathbb{Z}}$
and $\mathop{\rm span}\nolimits\{ x_n \}_{n\in \mathbb{Z}}$ the linear span
and the closed linear span (in the norm of $H$), respectively.
For a set $M\subseteq H$
we denote by $\overline{M}$ the closure of $M$ in the norm of $H$.
By $E_H$ we denote the identity operator in $H$, i.e. $E_H x = x$,
$x\in H$. If $H_1$ is a subspace of $H$, then $P_{H_1} =
P_{H_1}^{H}$ is an operator of the orthogonal projection on $H_1$
in $H$. By $[H]$ we denote a set of all bounded linear operators $A$ in $H$, $D(A)=H$.

\section{The solvability of the strong matrix Stieltjes moment problem.}

In this section we are going  to establish the following theorem.

\begin{thm}
\label{t2_1}
Let the strong matrix Stieltjes moment problem~(\ref{f1_1}) with a set of moments
$\{ S_n \}_{n\in \mathbb{Z}}$ be given. The moment problem has a solution
if and only if conditions~(\ref{f1_3}) are satisfied.
\end{thm}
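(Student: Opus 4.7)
\medskip
\noindent
\textbf{Proof plan.}
The necessity of~(\ref{f1_3}) is the easy direction. Given a solution $M(x)$, and arbitrary vectors $\xi_{-n},\dots,\xi_n\in\mathbb{C}^N$, form the vector Laurent polynomial $p(x)=\sum_{k=-n}^n \xi_k x^k$; then
$$\sum_{i,j=-n}^n \xi_i S_{i+j}\xi_j^* =\int_{\mathbb{R}_+} p(x)\,dM(x)\,p^*(x)\;\geq 0,$$
which yields $\Gamma_n\geq 0$, while multiplying the integrand by $x\geq 0$ on $\mathbb{R}_+$ gives $\widetilde\Gamma_n\geq 0$. So the content of the theorem is sufficiency, which I would prove by an operator model.

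\medskip
\noindent
The plan for sufficiency is the classical Hilbert-space construction adapted to Laurent polynomials. First, on the linear space $\mathbb{P}_L\otimes\mathbb{C}^N$ of $N$-component vector Laurent polynomials I introduce the sesquilinear form
$$\langle p,q\rangle:=\sum_{i,j}\xi_i\,S_{i+j}\,\eta_j^*,\qquad p(x)=\sum_i\xi_i x^i,\ q(x)=\sum_j\eta_j x^j.$$
The hypothesis $\Gamma_n\geq 0$ for all $n$ is exactly the statement that $\langle\cdot,\cdot\rangle$ is non-negative Hermitian. Factoring out the isotropic subspace $\{p:\langle p,p\rangle=0\}$ and completing, I obtain a Hilbert space $H$; let $x_k$ denote the class of the constant vector $\vec e_k$, $0\leq k\leq N-1$. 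Next I define on (the dense image of) $\mathbb{P}_L\otimes\mathbb{C}^N$ the multiplication operator $A:[p(x)]\mapsto [x\,p(x)]$. Since moments are indexed by all of $\mathbb{Z}$, multiplication by $x$ is indeed well defined on Laurent polynomials and, using the Hermitian symmetry $S_n=S_n^*$ (which is built in by construction on the moments produced by any matrix measure and should be assumed on the $S_n$), a direct computation shows $\langle Ap,q\rangle=\langle p,Aq\rangle$, so $A$ is symmetric on its natural domain.

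\medskip
\noindent
The key step is to now use the second family of inequalities $\widetilde\Gamma_n\geq 0$: unwinding definitions, these say exactly that $\langle Ap,p\rangle\geq 0$ for every $p$ in the domain of $A$, i.e.\ $A$ is a non-negative symmetric operator in $H$. By Krein's theorem, every non-negative symmetric operator in a Hilbert space admits a non-negative self-adjoint extension $\widetilde A$, possibly in a larger Hilbert space $\widetilde H\supseteq H$ (the Friedrichs, or any $\Pi$-resolvent extension referred to in the Introduction, will serve; for existence alone, even the Friedrichs extension inside $H$ is enough). Let $\{E_t\}_{t\geq 0}$ be the spectral resolution of $\widetilde A$ and set
$$M_{k,l}(t):=(E_t\, x_k,\,x_l)_{\widetilde H},\qquad M(t):=(M_{k,l}(t))_{k,l=0}^{N-1}.$$
This $M$ is a non-decreasing, left-continuous matrix function on $\mathbb{R}_+$ with $M(0)=0$. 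For every $n\in\mathbb{Z}$ one then computes
$$\int_{\mathbb{R}_+} t^n\,dM_{k,l}(t)=(\widetilde A^n x_k,x_l)=\langle x^n\vec e_k,\vec e_l\rangle=(S_n)_{k,l},$$
where the middle equality uses that $x_k,x_l\in H$ lie in the domain of every power of $\widetilde A$ thanks to the Laurent-polynomial domain of $A$ (this is why the two-sided index set matters, and is what makes $\widetilde A^{-n}x_k$ make sense for $n>0$).

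\medskip
\noindent
The main obstacle I expect is this last domain issue: one must verify that each basis vector $x_k$ belongs to the domains of all integer powers of $\widetilde A$, including negative ones, so that $\int t^n\,dM$ is finite and agrees with $S_n$ for negative $n$. Concretely this amounts to showing that $0$ is not a point mass of the spectral measure of $\widetilde A$ when restricted to vectors from $H$, equivalently that $[x^{-1}\vec e_k]\in H$ (which is built into the construction since $\mathbb{P}_L$ contains $x^{-1}$) and that $\widetilde A [x^{-1}\vec e_k]=[\vec e_k]$. Once this compatibility between the algebraic inverse supplied by the Laurent structure and the operator-theoretic inverse supplied by $\widetilde A$ is established, the moment identities follow for all $n\in\mathbb{Z}$ and the proof of sufficiency is complete.
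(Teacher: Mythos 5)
Your proposal is correct and follows essentially the same route as the paper: a GNS-type Hilbert space built from the moment form on vector Laurent polynomials (the paper phrases this via the infinite numerical matrix $\Gamma=(\gamma_{k,l})_{k,l=-\infty}^{\infty}$, with its $x_{jN+k}$ playing the role of your class $[x^{j}\vec e_k]$), the shift/multiplication operator shown symmetric from $\Gamma_n\geq 0$ and non-negative from $\widetilde\Gamma_n\geq 0$, Krein's extension theorem, and the spectral resolution of the extension. The domain issue you flag for the negative powers is handled in the paper exactly along the lines you sketch: since $\mathop{\rm Ker}\nolimits \widetilde A\perp R(\widetilde A)$ and $R(\widetilde A)$ contains the span of the $x_k$, one may restrict $\widetilde A$ to $\widetilde H\ominus\mathop{\rm Ker}\nolimits \widetilde A$ and then use $x_{rN+j}=A^{r}x_j$ for all $r\in\mathbb{Z}$.
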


\begin{proof}
{\it Necessity. } Let the strong matrix Stieltjes moment problem~(\ref{f1_1}) has a solution $M(x)$.
Choose an arbitrary vector function $f(x) = \sum_{k=-n}^n \sum_{j=0}^{N-1} f_{j,k} x^k \vec e_j$,
$f_{j,k}\in \mathbb{C}$. This function belongs to $L^2(M)$ and
$$ 0 \leq \int_\mathbb{R} f(x) x^s dM(x) f^*(x) =
\sum_{k,r=-n}^n \sum_{j,l=0}^{N-1} f_{j,k} \overline{f_{l,r}}
\int_{ \mathbb{R}_+ } x^{k+r+s} \vec e_j dM(x) \vec e_l^*  $$
$$ =
\sum_{k,r=-n}^n \sum_{j,l=0}^{N-1} f_{j,k} \vec e_j S_{k+r+s} \overline{f_{l,r}} \vec e_l^*  =
\sum_{k,r=-n}^n (f_{0,k},f_{1,k},...,f_{N-1,k}) S_{k+r+s}
$$
$$ * (f_{0,r},f_{1,r},...,f_{N-1,r})^* =
\left\{ \begin{array}{cc} v \Gamma_n v^*, & s=0\\
v \widetilde\Gamma_n v^*, & s=1\end{array}\right.,
$$
where $v = (f_{0,-n},f_{1,-n},...,f_{N-1,-n},f_{0,-n+1},f_{1,-n+1},...,f_{N-1,-n+1},...,$

\noindent
$f_{0,n},f_{1,n},...,f_{N-1,n})$. Here we make use of the rules for the multiplication of block matrices.

{\it Sufficiency. }
Let the strong matrix Stieltjes moment problem~(\ref{f1_1}) be given and conditions~(\ref{f1_3})
be satisfied.
Let $S_j = (S_{j;k,l})_{k,l=0}^{N-1}$, $S_{j;k,l}\in \mathbb{C}$, $j\in \mathbb{Z}$.
Consider the following infinite block matrix:
\begin{equation}
\label{f2_1}
\Gamma = (S_{i+j})_{i,j=-\infty}^\infty =
\left(
\begin{array}{ccccccc}
 & \vdots &  & \vdots &  & \vdots &  \\
\ldots & S_{-2n} & \ldots & S_{-n} & \ldots & S_0 & \ldots\\
\ldots & \vdots &  & \vdots &  & \vdots & \ldots\\
\ldots & S_{-n} & \ldots & \mbox{ \fbox{$S_{0}$} } & \ldots & S_n & \ldots\\
\ldots & \vdots &  & \vdots &  & \vdots & \ldots\\
\ldots & S_{0} & \ldots & S_{n} & \ldots & S_{2n} & \ldots\\
  & \vdots &  & \vdots &  & \vdots &  \end{array}
\right),
\end{equation}
where the element in the box corresponds to the indices $i=j=0$.

\noindent
We assume that the left upper entry of the element in the box stands in row~0, column~0. Let us numerate
rows (columns) in the increasing order to the bottom (respectively to the right).
Then we numerate rows (columns) in the decreasing order to the top (respectively to the left).
Thus, the matrix $\Gamma$ may be viewed as a numerical matrix: $\Gamma = (\gamma_{k,l})_{k,l=-\infty}^\infty$,
$\gamma_{k,l}\in \mathbb{C}$.
Observe that the following equalities hold
\begin{equation}
\label{f2_2}
\gamma_{rN+j,tN+n} = S_{r+t;j,n},\qquad r,t\in \mathbb{Z},\ 0\leq j,n\leq N-1.
\end{equation}
From conditions~(\ref{f1_3}) it easily follows that
\begin{equation}
\label{f2_2_1}
(\gamma_{k,l})_{k,l=-r}^r \geq 0,\quad
(\gamma_{k+N,l})_{k,l=-r}^r \geq 0,\qquad \forall r\in \mathbb{Z}_+.
\end{equation}
The first inequality in the latter relation implies that there exist
a Hilbert space $H$ and a set of elements $\{ x_n \}_{n\in \mathbb{Z}}$ in $H$ such that
\begin{equation}
\label{f2_3}
(x_n,x_m)_H = \gamma_{n,m},\qquad n,m\in \mathbb{Z},
\end{equation}
and $\mathop{\rm span}\nolimits \{ x_n \}_{n\in \mathbb{Z}} = H$, see Lemma in~\cite[p. 177]{cit_200_SK}.
The latter fact is well known and goes back to a paper of Gelfand, Naimark~\cite{cit_900_GN}.

\noindent
By~(\ref{f2_2}) we get
\begin{equation}
\label{f2_4}
\gamma_{a + N,b} = \gamma_{a,b + N},\qquad a,b\in \mathbb{Z}.
\end{equation}
Set $L = \mathop{\rm Lin}\nolimits\{ x_n \}_{n\in \mathbb{Z}}$. Choose an arbitrary element
$x\in L$.
Let $x = \sum_{k=-\infty}^\infty \alpha_k x_k$, $x = \sum_{k=-\infty}^\infty \beta_k x_k$,
where $\alpha_k,\beta_k\in \mathbb{C}$. Here only a finite number of coefficients $\alpha_k$, $\beta_k$
are non-zero. {\it In what follows, this will be assumed in analogous situations with elements of the linear span.}
By~(\ref{f2_3}),(\ref{f2_4}) we may write
$$ \left( \sum_{k=-\infty}^\infty \alpha_k x_{k+ N}, x_l \right) =
\sum_{k=-\infty}^\infty \alpha_k \gamma_{k+ N,l} = \sum_{k=-\infty}^\infty \alpha_k \gamma_{k,l+ N} = $$
$$ = \sum_{k=-\infty}^\infty \alpha_k ( x_{k}, x_{l+ N} ) =
\left( \sum_{k=-\infty}^\infty \alpha_k x_{k}, x_{l+ N} \right) = (x,x_{l+ N}),\qquad l\in \mathbb{Z}. $$
Similarly we conclude that
$\left( \sum_{k=-\infty}^\infty \beta_k x_{k+ N}, x_l \right) = (x,x_{l+ N})$, $l\in \mathbb{Z}$.
Since $\overline{L} = H$, we get
$\sum_{k=-\infty}^\infty \alpha_k x_{k+ N} = \sum_{k=-\infty}^\infty \beta_k x_{k+ N}$.

\noindent
Set
\begin{equation}
\label{f2_6}
A_0 x = \sum_{k=0}^\infty \alpha_k x_{k+N},\qquad x\in L,\ x = \sum_{k=-\infty}^\infty \alpha_k x_{k},\
\alpha_k\in \mathbb{C}.
\end{equation}
The above considerations ensure us that the operator $A_0$ is defined correctly.
Choose arbitrary elements $x,y\in L$, $x = \sum_{k=-\infty}^\infty \alpha_k x_{k}$,
$y = \sum_{n=-\infty}^\infty \beta_n x_{n}$, $\alpha_k,\beta_n\in \mathbb{C}$,
and write
$$ (A_0 x,y)_H = \left( \sum_{k=-\infty}^\infty \alpha_k x_{k+N},\sum_{n=-\infty}^\infty \beta_n x_{n} \right)_H =
\sum_{k,n=-\infty}^\infty \alpha_k \overline{\beta_n} (x_{k+N},x_n)_H = $$
$$ = \sum_{k,n=-\infty}^\infty \alpha_k \overline{\beta_n} (x_{k},x_{n+N})_H
= \left( \sum_{k=-\infty}^\infty \alpha_k x_{k},\sum_{n=-\infty}^\infty \beta_n x_{n+N} \right)_H =
(x,A_0 y)_H. $$
Moreover, we have
\begin{equation}
\label{f2_6_1}
(A_0 x,x)_H = \sum_{k,n=-\infty}^\infty \alpha_k \overline{\beta_n} (x_{k+N},x_n)_H =
\sum_{k,n=-\infty}^\infty \alpha_k \overline{\beta_n} \gamma_{k+N,n} \geq 0.
\end{equation}
Thus, the operator $A_0$ is a non-negative symmetric operator in $H$. Set $A = \overline{A_0}$.
The operator $A$ always has a non-negative self-adjoint extension $\widetilde A$
in a Hilbert space $\widetilde H\supseteq H$~\cite[Theorem 7, p.450]{cit_300_Kr} .
We may assume that $\mathop{\rm Ker}\nolimits \widetilde A = \{ 0 \}$.
In the opposite case, since $\mathop{\rm Ker}\nolimits \widetilde A \perp R(\widetilde A)$,
$R(\widetilde A) \supseteq L$,
we conclude that $\mathop{\rm Ker}\nolimits \widetilde A \perp H$. Therefore the operator $\widetilde A$,
restricted to $\widetilde H \ominus \mathop{\rm Ker}\nolimits \widetilde A$, also will be
a self-adjoint extension of the operator
$A$, with a null kernel.

\noindent
Let $\{ \widetilde E_\lambda \}_{\lambda\in \mathbb{R}}$ be the left-continuous
orthogonal resolution of unity of the operator $\widetilde A$.
By the induction argument it is easy to check that
$$ x_{rN+j} = A^r x_j,\qquad r\in \mathbb{Z},\ 0\leq j\leq N-1. $$
By~(\ref{f2_2}),(\ref{f2_3}) we may write
$$ S_{r;j,n} = \gamma_{rN+j,n} = ( x_{rN+j},x_{n} )_H = (A^r x_j, x_n)_H =
( \widetilde A^r x_j, x_n)_{\widetilde H} $$
$$  =
\int_{\mathbb{R}_+} \lambda^{r} d (\widetilde E_\lambda x_j, x_n)_{\widetilde H} =
\int_{\mathbb{R}_+} \lambda^{r} d \left( P^{\widetilde H}_H \widetilde E_\lambda x_j, x_n \right)_{H},\quad
0\leq n\leq N-1. $$
Therefore we get
\begin{equation}
\label{f2_9}
S_{r} = \int_{\mathbb{R}_+} \lambda^{r} d \widetilde M(\lambda),\qquad r\in \mathbb{Z},
\end{equation}
where $\widetilde M(\lambda) := \left( \left( P^{\widetilde H}_H \widetilde E_\lambda x_j,
x_n \right)_{H} \right)_{j,n=0}^{N-1}$.
Therefore the matrix function
$\widetilde M(\lambda)$ is a solution of the moment problem~(\ref{f1_1})
(From the properties of  the orthogonal resolution of unity it easily follows that
$\widetilde M (\lambda)$ is left-continuous on $(0,+\infty)$, non-decreasing and
$\widetilde M(0) = 0$).

\end{proof}

\section{An analytic description of solutions of the strong matrix Stieltjes moment problem.}

Let $A$ be an arbitrary closed Hermitian operator in a Hilbert space $H$, $D(A) \subseteq H$.
Let $\widehat A$ be an arbitrary self-adjoint extension of $A$
in a Hilbert space $\widehat H\supseteq H$.
Denote by $\{ \widehat E_\lambda\}_{\lambda\in \mathbb{R}}$
its orthogonal resolution of unity. Recall that an operator-valued function
$\mathbf R_z = P_H^{\widehat H} R_z(\widehat A)$ is said to be a {\bf generalized resolvent} of $A$,
$z\in \mathbb{C}\backslash \mathbb{R}$.
A function
$\mathbf E_\lambda = P_H^{\widehat H} \widehat E_\lambda$, $\lambda\in \mathbb{R}$,
is said to be a {\bf spectral function} of $A$.
There exists a bijective correspondence between generalized resolvents and
left-continuous (or normalized in some other way) spectral functions established
by the following relation~(\cite{c_1000_AG}):
\begin{equation}
\label{f3_1}
(\mathbf R_z f,g)_H = \int_\mathbb{R} \frac{1}{\lambda - z}
d( \mathbf E_\lambda f,g)_H,\qquad f,g\in H,\ z\in \mathbb{C}\backslash \mathbb{R}.
\end{equation}

If the operator $A$ is densely defined symmetric and non-negative ($A\geq 0$), and
the extension $\widehat A$ is self-adjoint and non-negative, then the corresponding
generalized resolvent $\mathbf{R}_z$ and the spectral function $\mathbf{E}_\lambda$ are said to be
a {\bf generalized $\Pi$-resolvent} and a {\bf $\Pi$-spectral function} of $A$. Relation~(\ref{f3_1})
establishes a bijective correspondence between generalized $\Pi$-resolvents and
left-continuous $\Pi$-spectral functions.

If the operator $A$ is a Hermitian contraction ($\| A \| \leq 1$), and
the extension $\widehat A$ is a self-adjoint contraction, then the corresponding
generalized resolvent $\mathbf{R}_z$ and the spectral function $\mathbf{E}_\lambda$ are said to be
a {\bf generalized $sc$-resolvent} and a {\bf $sc$-spectral function} of $A$. Relation~(\ref{f3_1})
establishes a bijective correspondence between generalized $sc$-resolvents and
left-continuous $sc$-spectral functions, as well.

If a generalized $\Pi$-resolvent (a generalized $sc$-resolvent) is generated by an extension {\it inside $H$},
i.e. $\widehat H = H$, then it is said to be a {\bf canonical $\Pi$-resolvent} (respectively
a {\bf canonical $sc$-resolvent}).

Firstly, we shall obtain a description of solutions of the strong matrix Stieltjes moment problem
by virtue of $\Pi$-spectral functions.

\begin{thm}
\label{t3_1}
Let the strong matrix Stieltjes moment problem~(\ref{f1_1}) be given and condition~(\ref{f1_3}) be satisfied.
Suppose that the operator $A=\overline{A_0}$ in a Hilbert space $H$ is constructed for the moment problem
by~(\ref{f2_6}) and the preceding procedure.
All solutions of the moment problem have the following form
\begin{equation}
\label{f3_2}
M(\lambda) = (m_{k,j} (\lambda))_{k,j=0}^{N-1},\quad
m_{k,j} (\lambda) = ( \mathbf E_\lambda x_k, x_j)_H,
\end{equation}
where $\mathbf E_\lambda$ is a left-continuous $\Pi$-spectral function of the operator $A$.
On the other hand, each left-continuous $\Pi$-spectral function of the operator $A$ generates
by~(\ref{f3_2}) a solution of the moment problem.
Moreover, the correspondence between all left-continuous $\Pi$-spectral functions of the operator $A$
and all solutions of the moment problem is bijective.
\end{thm}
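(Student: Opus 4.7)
The plan is to prove Theorem~\ref{t3_1} by establishing three statements separately: every $\Pi$-spectral function of $A$ produces via~(\ref{f3_2}) a solution of the moment problem; every solution arises this way; and the correspondence is injective. For the first statement I will follow the computation already carried out in the sufficiency part of Theorem~\ref{t2_1}. Given a left-continuous $\Pi$-spectral function $\mathbf{E}_\lambda = P_H^{\widehat H}\widehat E_\lambda$ coming from a non-negative self-adjoint extension $\widehat A\supseteq A$ in $\widehat H\supseteq H$, the identity $\widehat A^r x_j=A^r x_j=x_{rN+j}$ for $r\geq 0$ together with the spectral theorem immediately yields $\int_{\mathbb{R}_+}\lambda^r\,dm_{k,j}(\lambda)=S_{r;k,j}$. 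The subtle case $r<0$ requires $\widehat A^{-n}x_j$ to be interpreted through the functional calculus. Here the key point is the two-sided chain $\widehat A x_{s-N}=x_s$ ($s\in\mathbb{Z}$), which forces $x_{-nN+j}\in R(\widehat A)\subseteq(\mathop{\rm Ker}\nolimits\widehat A)^\perp$ for every $n\geq 0$; consequently $x_j\in D(\widehat A^{-n})$ with $\widehat A^{-n}x_j=x_{-nN+j}$, and $\int_{\mathbb{R}_+}\lambda^{-n}\,d(\widehat E_\lambda x_k,x_j)_{\widehat H}=(x_{-nN+k},x_j)_H=S_{-n;k,j}$ follows.

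For the second statement I will build a $\Pi$-spectral function from a given solution $M(\lambda)$ via an $L^2(M)$-dilation. Define $V\colon L\to L^2(M)$ by $Vx_{rN+j}:=x^r\vec e_j$; the identity $(Vx_{rN+j},Vx_{tN+n})_{L^2(M)}=\int_{\mathbb{R}_+} x^{r+t}\,dm_{j,n}(x)=S_{r+t;j,n}=(x_{rN+j},x_{tN+n})_H$ shows that $V$ is well-defined and isometric on $L$, hence extends to an isometric embedding $V\colon H\to L^2(M)$. The multiplication operator $Q\colon f(x)\mapsto xf(x)$ on $L^2(M)$ is non-negative self-adjoint (since $M$ is concentrated on $\mathbb{R}_+$) and satisfies $QVx_{rN+j}=Vx_{(r+1)N+j}=VA_0 x_{rN+j}$, so after identifying $H$ with $VH$ the operator $Q$ becomes a non-negative self-adjoint extension of $A$ in the enlarged Hilbert space $\widehat H:=L^2(M)$. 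Compressing its spectral resolution (multiplication by $\mathbf 1_{(-\infty,\lambda)}$) back to $H$ produces a left-continuous $\Pi$-spectral function $\mathbf E_\lambda$ for which $(\mathbf E_\lambda x_k,x_j)_H=\int_{0}^\lambda dm_{k,j}(\mu)=m_{k,j}(\lambda)$ whenever $0\leq k,j\leq N-1$, so~(\ref{f3_2}) recovers $M$.

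For injectivity, suppose $\mathbf E_\lambda^{(1)}$ and $\mathbf E_\lambda^{(2)}$ come from extensions $\widehat A^{(1)}$, $\widehat A^{(2)}$ and produce the same $M$. For $k=rN+a$ and $l=sN+b$ with $0\leq a,b\leq N-1$, the functional-calculus computation of the first stage yields $(\mathbf E_\lambda^{(i)} x_k,x_l)_H=\int_{0}^\lambda\mu^{r+s}\,dm_{a,b}(\mu)$, an expression depending only on $M$. Hence $(\mathbf E_\lambda^{(1)}y,z)_H=(\mathbf E_\lambda^{(2)}y,z)_H$ on $L\times L$, and the density of $L$ in $H$ together with the uniform bound $\|\mathbf E_\lambda^{(i)}\|\leq 1$ propagates the equality to all of $H\times H$, giving $\mathbf E_\lambda^{(1)}=\mathbf E_\lambda^{(2)}$. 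The principal technical obstacle throughout is the rigorous handling of negative powers of the (possibly non-injective) extension $\widehat A$; the observation that the two-sided indexing of $\{x_n\}_{n\in\mathbb{Z}}$ built into the strong moment problem guarantees $x_k\in R(\widehat A^{n})$ for \emph{every} $k\in\mathbb{Z}$ and $n\geq 0$ is what makes $\widehat A^{-n}x_j=x_{-nN+j}$ well-defined and keeps the negative-moment integrals finite.
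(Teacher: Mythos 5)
Your proposal is correct, and while its first two parts track the paper's proof closely, the injectivity argument takes a genuinely different route. For surjectivity you build the isometry in the direction $H\to L^2(M)$ rather than the paper's $L^2_L(\widehat M)\to H$, and you read off $(\mathbf E_\lambda x_k,x_j)_H=m_{k,j}(\lambda)$ directly from $\bigl(\mathbf 1_{[0,\lambda)}\vec e_k,\vec e_j\bigr)_{L^2(M)}$ instead of passing through resolvents and the Stieltjes--Perron inversion formula; these differences are cosmetic. The real divergence is in uniqueness: the paper establishes the decomposition $L=H_z+L_N$ with $H_z=(A-zE_H)D(A)$ (relation~(\ref{f3_10}), via an explicit recursion for the coefficients $d_k$) and then propagates equality of the two generalized resolvents from $L_N\times L_N$ to $H\times H$ using~(\ref{f3_1}), the identity $\mathbf R_{i,z}u=x$ for $u=(A-zE_H)x$, and adjointness. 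You instead observe that every entry $(\mathbf E_\lambda x_{rN+a},x_{sN+b})_H=\int_{[0,\lambda)}\mu^{r+s}\,dm_{a,b}(\mu)$ is already determined by the solution $M$, because $\widehat E_\lambda$ commutes with $\widehat A$ and $x_{rN+a}=\widehat A^{\,r}x_a$ with every $x_k$ orthogonal to $\mathop{\rm Ker}\nolimits \widehat A$; density of $L$ and $\|\mathbf E_\lambda\|\leq 1$ then finish the job. This is shorter and stays entirely at the level of spectral measures, though it does need the domain bookkeeping you only gesture at, namely $\int_{\mathbb{R}_+}\mu^{2m}\,d(\widehat E_\mu x_a,x_a)=\|x_{mN+a}\|^2<\infty$ for all $m\in\mathbb{Z}$ and the absence of an atom at $0$, both of which follow from the two-sided chain exactly as you indicate. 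What the paper's longer route buys is the by-product, recorded in its proof, that the deficiency indices of $A$ are $(n,n)$ with $n\leq N$; your route does not yield that, but it is not needed for Theorem~\ref{t3_1} itself. Both arguments ultimately exploit the same structural feature of the strong problem: $x_0,\dots,x_{N-1}$ generate all of $L$ under positive and negative powers of the extension.
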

\begin{proof}
Let $\mathbf E_\lambda$ be an arbitrary $\Pi$-spectral function of the operator $A$.
It corresponds to a self-adjoint operator $\widetilde A\supseteq A$ in a Hilbert space $\widetilde H
\supseteq H$. Then we repeat considerations after~(\ref{f2_6_1})
to obtain that $M(\lambda)$, given by~(\ref{f3_2}), is a solution of the moment problem~(\ref{f1_1}).

Let $\widehat M(x) = ( \widehat m_{k,l}(x) )_{k,l=0}^{N-1}$ be an arbitrary solution
of the moment problem~(\ref{f1_1}).
Consider the space $L^2(\widehat M)$ and let
$Q$ be the operator of  multiplication by an independent variable in $L^2(\widehat M)$.
A set (of classes of equivalence) of functions
$f\in L^2(\widehat M)$ such that
(the corresponding class includes) $f=(f_0,f_1,\ldots, f_{N-1})$, $f\in \mathbb{P}_L$, we denote by
$\mathbb{P}^2_L(\widehat M)$. Set $L^2_{L}(\widehat M) := \overline{ \mathbb{P}^2_L(\widehat M) }$.

For an arbitrary vector Laurent polynomial $f=(f_0,f_1,\ldots, f_{N-1})$, $f_j\in \mathbb{P}_L$,
there exists a unique representation of the following form:
\begin{equation}
\label{f3_3}
f(x) = \sum_{k=0}^{N-1} \sum_{j=-\infty}^\infty \alpha_{k,j} x^j \vec e_k,\qquad \alpha_{k,j}\in \mathbb{C},
\end{equation}
where all but finite number of coefficients $\alpha_{k,j}$ are zero.
Choose another vector Laurent polynomial $g$ with a representation
\begin{equation}
\label{f3_4}
g(x) = \sum_{l=0}^{N-1} \sum_{r=-\infty}^\infty \beta_{l,r} x^r \vec e_l,\qquad \beta_{l,r}\in \mathbb{C}.
\end{equation}
We may write
$$ (f,g)_{L^2(\widehat M)} = \sum_{k,l=0}^{N-1} \sum_{j,r=-\infty}^\infty \alpha_{k,j}\overline{\beta_{l,r}}
\int_{\mathbb{R}_+} x^{j+r} \vec e_k d\widehat M(x) \vec e_l^* $$
\begin{equation}
\label{f3_5}
= \sum_{k,l=0}^{N-1}
\sum_{j,r=-\infty}^\infty \alpha_{k,j}\overline{\beta_{l,r}}
\int_{\mathbb{R}_+} x^{j+r} d\widehat m_{k,l}(x)
= \sum_{k,l=0}^{N-1} \sum_{j,r=-\infty}^\infty \alpha_{k,j}\overline{\beta_{l,r}}
S_{j+r;k,l}.
\end{equation}
On the other hand, we have
$$ \left( \sum_{j=-\infty}^\infty \sum_{k=0}^{N-1} \alpha_{k,j} x_{jN+k},
\sum_{r=-\infty}^\infty \sum_{l=0}^{N-1} \beta_{l,r} x_{rN+l} \right)_H =
\sum_{k,l=0}^{N-1} \sum_{j,r=-\infty}^\infty \alpha_{k,j}\overline{\beta_{l,r}} $$
\begin{equation}
\label{f3_6}
* (x_{jN+k}, x_{rN+l})_H = \sum_{k,l=0}^{N-1} \sum_{j,r=-\infty}^\infty \alpha_{k,j}\overline{\beta_{l,r}}
\gamma_{jN+k,rN+l}
= \sum_{k,l=0}^{N-1} \sum_{j,r=-\infty}^\infty \alpha_{k,j}\overline{\beta_{l,r}}
S_{j+r;k,l}.
\end{equation}
By~(\ref{f3_5}),(\ref{f3_6}) we get
\begin{equation}
\label{f3_7}
(f,g)_{L^2(\widehat M)} = \left( \sum_{j=-\infty}^\infty \sum_{k=0}^{N-1} \alpha_{k,j} x_{jN+k},
\sum_{r=-\infty}^\infty \sum_{l=0}^{N-1} \beta_{l,r} x_{rN+l} \right)_H.
\end{equation}
Set
\begin{equation}
\label{f3_8}
Vf = \sum_{j=-\infty}^\infty \sum_{k=0}^{N-1} \alpha_{k,j} x_{jN+k},
\end{equation}
for a vector Laurent polynomial $f(x) = \sum_{k=0}^{N-1} \sum_{j=-\infty}^\infty \alpha_{k,j} x^j \vec e_k$.
If $f$, $g$ are vector Laurent polynomials with representations~(\ref{f3_3}),(\ref{f3_4}), such that
$\| f-g \|_{L^2(\widehat M)} = 0$, then
by~(\ref{f3_7}) we may write
$$ \| Vf - Vg \|_H^2 = (V(f-g),V(f-g))_H = ( f-g,f-g )_{L^2(\widehat M)} = \| f-g\|_{L^2(\widehat M)}^2 = 0.$$
Thus, $V$ is correctly defined as an operator from $\mathbb{P}^2(\widehat M)$ to $H$.
Relation~(\ref{f3_7}) shows that $V$  is an isometric transformation from
$\mathbb{P}^2_L(\widehat M)$ on $L$.
We extend $V$ by continuity to an isometric transformation from $L^2_L(\widehat M)$ on $H$.
Observe that
\begin{equation}
\label{f3_9}
V x^j \vec e_k = x_{jN+k},\qquad j\in \mathbb{Z};\quad 0\leq k\leq N-1.
\end{equation}
Let $L^2_1 (\widehat M) := L^2(\widehat M)\ominus L^2_L (\widehat M)$, and
$U := V\oplus E_{L^2_1 (\widehat M)}$. The operator $U$ is an isometric transformation from
$L^2(\widehat M)$ on $H\oplus L^2_1 (\widehat M)=:\widehat H$.
Set
$$ \widehat A := UQU^{-1}. $$
The operator $\widehat A$ is a self-adjoint operator in $\widehat H$.
Let $\{ \widehat E_\lambda \}_{\lambda\in \mathbb{R}}$ be its left-continuous orthogonal resolution of unity.
Notice that
$$ UQU^{-1} x_{jN+k} = VQV^{-1} x_{jN+k} = VQ x^j \vec e_k = V x^{j+1} \vec e_k =
x_{(j+1)N+k} = $$
$$= x_{jN+k+N} = Ax_{jN+k},\qquad j\in \mathbb{Z};\quad 0\leq k\leq N-1. $$
By linearity we get: $UQU^{-1} x = Ax$, $x\in L$,
and therefore $\widehat A\supseteq A$.
Choose an arbitrary $z\in \mathbb{C}\backslash \mathbb{R}$ and write
$$ \int_{\mathbb{R}_+} \frac{1}{\lambda - z} d( \widehat E_\lambda x_k, x_j)_{\widehat H} =
\left( \int_{\mathbb{R}_+} \frac{1}{\lambda - z} d\widehat E_\lambda x_k, x_j \right)_{\widehat H} $$
$$ = \left( U^{-1} \int_{\mathbb{R}_+} \frac{1}{\lambda - z} d\widehat E_\lambda x_k, U^{-1} x_j
\right)_{L^2(\widehat M)} $$
$$ = \left( \int_{\mathbb{R}_+} \frac{1}{\lambda - z} d U^{-1} \widehat E_\lambda U \vec e_k,
\vec e_j \right)_{L^2(\widehat M)} =
\left( \int_{\mathbb{R}_+} \frac{1}{\lambda - z} d E_{\lambda} \vec e_k, \vec e_j \right)_{L^2(\widehat M)} $$
$$ = \left( (Q-z)^{-1} \vec e_k, \vec e_j \right)_{L^2(\widehat M)} =
\int_{\mathbb{R}_+} \frac{1}{\lambda - z} \vec e_k d\widehat M(\lambda) \vec e_j
= \int_{\mathbb{R}_+} \frac{1}{\lambda - z} d\widehat m_{k,j}(\lambda), $$
where $E_\lambda$ is a left-continuous orthogonal resolution of unity of
the operators $Q$.
By the Stieltjes-Perron inversion formula (e.g.~\cite{cit_2000_Akh}) we conclude that
$$ \widehat m_{k,j} (\lambda) = ( P^{\widehat H}_H \widehat E_\lambda x_k, x_j)_H,\qquad \lambda\in \mathbb{R}. $$
Thus, $\widehat M$ is generated by a $\Pi$-spectral function of $A$.

Let us check that an arbitrary element $u\in L$ can be represented in the following form
\begin{equation}
\label{f3_10}
u = u_z + u_0,\qquad u_z\in H_z,\ u_0\in L_N,
\end{equation}
where $L_N := \mathop{\rm Lin}\nolimits\{ x_n \}_{n=0}^{N-1}$,
$H_z := (A-zE_H)D(A)$.
Let $u=\sum_{k=-\infty}^\infty c_k x_k$, $c_k\in \mathbb{C}$, and choose a number
$z\in \mathbb{C}\backslash \mathbb{R}$.
Suppose that $c_k=0$, if $k\leq r$ or $k\geq R$, where $r\leq -2$; $R\geq N+1$.
Set
$d_k := 0$, if $k\leq r$ or $k\geq R - N$.
Then we set
$$ d_k := \frac{1}{z}( d_{k-N} - c_k),\qquad k=r+1,...,-1; $$
$$ d_{k-N} := zd_k + c_k,\qquad k=R -1,R -2,...,N. $$
Set $v := \sum_{k=-\infty}^\infty d_k x_k \in L$.
Then we directly calculate that
$$ (A-zE_H) v - u = \sum_{k=0}^{N-1} (d_{k-N}-zd_k - c_k) x_k, $$
and relation~(\ref{f3_10}) holds.
From the latter equality it easily follows that the deficiency index of $A$
is equal to $(n,n)$, $0\leq n\leq N$.

Let us check that different left-continuous $\Pi$-spectral functions of the operator $A$ generate
different solutions of the moment problem~(\ref{f1_1}).
Suppose that two different left-continuous $\Pi$-spectral functions generate the same solution of the moment
problem.
This  means that there exist two self-adjoint operators
$A_j\supseteq A$, in Hilbert spaces $H_j\supseteq H$, such that
$P_{H}^{H_1} E_{1,\lambda} \not= P_{H}^{H_2} E_{2,\lambda}$,
and
$$ (P_{H}^{H_1} E_{1,\lambda} x_k,x_j)_H =
(P_{H}^{H_2} E_{2,\lambda} x_k,x_j)_H,\qquad 0\leq k,j\leq N-1,\quad
\lambda\in\mathbb{R}, $$
where $\{ E_{n,\lambda} \}_{\lambda\in\mathbb{R}}$ are orthogonal left-continuous
resolutions of unity of operators $A_n$, $n=1,2$.
By the linearity we get
\begin{equation}
\label{f3_11}
(P_{H}^{H_1} E_{1,\lambda} x,y)_H = (P_{H}^{H_2} E_{2,\lambda} x,y)_H,\qquad x,y\in L_N,\quad \lambda\in\mathbb{R}.
\end{equation}
Set $\mathbf R_{n,\lambda} := P_{H}^{H_n} R_{\lambda}(A_n)$, $n=1,2$.
By~(\ref{f3_11}),(\ref{f3_1}) we get
\begin{equation}
\label{f3_12}
(\mathbf R_{1,\lambda} x,y)_H =
(\mathbf R_{2,\lambda} x,y)_H,\qquad x,y\in L_N,\quad \lambda\in\mathbb{C}\backslash\mathbb{R}.
\end{equation}
Since
$$ R_{z}(A_j) (A-zE_H) x = (A_j - z E_{H_j} )^{-1} (A_j - z E_{H_j}) x = x,\qquad x\in L=D(A_0),$$
then $R_{z}(A_1) u = R_{z}(A_2) u \in H$, $u\in H_z$;
\begin{equation}
\label{f3_13}
\mathbf R_{1,z} u = \mathbf R_{2,z} u,\qquad u\in H_z,\ z\in\mathbb{C}\backslash\mathbb{R}.
\end{equation}
We may write
$$ (\mathbf R_{n,z} x, u)_H = (R_{z}(A_n) x, u)_{H_n} = ( x, R_{\overline{z}}(A_n) u)_{H_n} =
( x, \mathbf R_{n,\overline{z}} u)_H, $$
where $x\in L_N$, $u\in H_{\overline z}$, $n=1,2$,
and therefore
\begin{equation}
\label{f3_14}
(\mathbf R_{1,z} x,u)_H = (\mathbf R_{2,z} x,u)_H,\qquad x\in L_N,\ u\in H_{\overline z}.
\end{equation}
By~(\ref{f3_10}) an arbitrary element
$y\in L$ can be represented in the following form $y=y_{ \overline{z} } + y'$,
$y_{ \overline{z} }\in H_{ \overline{z} }$, $y'\in L_N$.
Using~(\ref{f3_12}) and~(\ref{f3_14}) we obtain
$$ (\mathbf R_{1,z} x,y)_H = (\mathbf R_{1,z} x, y_{ \overline{z} } + y')_H =
(\mathbf R_{2,z} x, y_{ \overline{z} } + y')_H = (\mathbf R_{2,z} x,y)_H, $$
where $x\in L_N,\ y\in L$.
Since $\overline{L}=H$, we obtain
\begin{equation}
\label{f3_15}
\mathbf R_{1,z} x = \mathbf R_{2,z} x,\qquad x\in L_N,\ z\in\mathbb{C}\backslash\mathbb{R}.
\end{equation}
For arbitrary $x\in L$, $x=x_z + x'$, $x_z\in H_z$, $x'\in L_N$, using relations~(\ref{f3_13}),(\ref{f3_15})
we get
$$ \mathbf R_{1,z} x = \mathbf R_{1,z} (x_z + x') =
\mathbf R_{2,z} (x_z + x') = \mathbf R_{2,z} x,\qquad x\in L,\ z\in\mathbb{C}\backslash\mathbb{R}, $$
and therefore $\mathbf R_{1,z} = \mathbf R_{2,z}$, $z\in\mathbb{C}\backslash\mathbb{R}$.
By~(\ref{f3_1}) this means that the corresponding $\Pi$-spectral functions coincide.
The obtained contradiction completes the proof.
\end{proof}

We shall use some known important facts about sc-resolvents, see~\cite{cit_400_KO}.
Let $B$ be an arbitrary Hermitian contraction in a Hilbert space $H$.
Set $\mathcal D=D(B)$, $\mathcal R = H \ominus \mathcal D$.
A set of all self-adjoint contractive extensions of $B$ inside $H$, we denote by $\mathcal{B}_H(B)$.
A set of all self-adjoint contractive extensions of $B$ in a Hilbert space $\widetilde H\supseteq H$,
we denote by $\mathcal{B}_{\widetilde H} (B)$.
By Krein's theorem~\cite[Theorem 2, p. 440]{cit_300_Kr}, there always exists
a self-adjoint extension $\widehat B$ of the operator $B$ in $H$ with the norm $\| B \|$.
Therefore the set $\mathcal{B}_H(B)$ is non-empty.
There are the "minimal"$~$ element $B^\mu$ and
the "maximal"$~$ element $B^M$ in this set, such that $\mathcal{B}_H(B)$ coincides with the operator
segment
\begin{equation}
\label{f3_16}
B^\mu \leq \widetilde B\leq B^M.
\end{equation}
In the case $B^\mu = B^M$ the set $\mathcal{B}_H(B)$ consists of a unique element. This case is said to be
{\bf determinate}.
The case $B^\mu \not= B^M$  is called {\bf indeterminate}.
The case $B^\mu x \not= B^M x$, $x\in \mathcal{R}\backslash\{ 0 \}$, is said to be
{\bf completely indeterminate}.
The indeterminate case can be always reduced to the completely indeterminate. If
$\mathcal{R}_0 = \{ x\in \mathcal{R}:\ B^\mu x = B^M x\}$, we may set
\begin{equation}
\label{f3_17}
B_e x = B x,\ x\in \mathcal{D};\quad B_e x = B^\mu x,\ x\in \mathcal{R}_0.
\end{equation}
The sets of generalized sc-resolvents for $B$ and for $B_e$ coincide (\cite[p. 1039]{cit_400_KO}).

\noindent
Elements of $\mathcal{B}_H(B)$ are {\bf canonical} (i.e. inside $H$) extensions of $B$ and their
resolvents are said to be {\bf canonical sc-resolvents} of $B$.
On the other hand, elements  of $\mathcal{B}_{\widetilde H} (B)$ for all possible $\widetilde H\supseteq H$
generate generalized sc-resolvents of $B$ (here the space $\widetilde H$ is not
fixed). The set of all generalized sc-resolvents we denote by
$\mathcal{R}^c(B)$.
Set
\begin{equation}
\label{f3_18}
C = B^M - B^\mu,
\end{equation}
\begin{equation}
\label{f3_19}
Q_\mu (z) = \left.\left( C^{\frac{1}{2}} R^\mu_z C^{\frac{1}{2}} + E_H \right)\right|_{\mathcal{R}},\qquad
z\in\mathbb{C}\backslash[-1,1],
\end{equation}
where $R^\mu_z = (B^\mu - zE_H)^{-1}$.

\noindent
An operator-valued function $k(z)$ with values in $[\mathcal{R}]$ belongs to the class $R_{\mathcal{R}}[-1,1]$ if

\begin{itemize}

\item[1)] $k(z)$ is analytic in $z\in\mathbb{C}\backslash[-1,1]$ and
$$ \frac{ \mathop{\rm Im}\nolimits k(z) }{ \mathop{\rm Im}\nolimits z }\leq 0,\qquad z\in\mathbb{C}:\ \mathop{\rm Im}\nolimits z\not= 0; $$

\item[2)] For $z\in \mathbb{R}\backslash[-1,1]$, $k(z)$ is a self-adjoint non-negative contraction.

\end{itemize}
Notice that functions from the class $R_{\mathcal{R}}[-1,1]$ admit a special integral
representation, see~\cite{cit_400_KO}.

\begin{thm}
\label{t3_2} (\cite[p. 1053]{cit_400_KO}).
Let $B$ be a Hermitian contraction in a Hilbert space $H$ with $D(B)=\mathcal{D}$; $\mathcal{R} =
H\ominus \mathcal{D}$. Suppose that for $B$ it takes place the completely indeterminate case and
the corresponding operator $C$, as an operator in $\mathcal{R}$, has an inverse in $[\mathcal{R}]$.
Then the following equality:
\begin{equation}
\label{f3_20}
\widetilde R_z^c = R^\mu_z - R^\mu_z C^{\frac{1}{2}} k(z) \left(
E_\mathcal{R} + (Q_\mu(z)-E_\mathcal{R}) k(z)
\right)^{-1}
C^{\frac{1}{2}} R^\mu_z,
\end{equation}
where $k(z)\in R_{\mathcal{R}}[-1,1]$, $\widetilde R_z^c\in \mathcal{R}^c(B)$,
establishes a bijective correspondence between the set $R_{\mathcal{R}}[-1,1]$ and the set $\mathcal{R}^c(B)$.

Moreover, the canonical resolvents correspond in~(\ref{f3_20}) to the constant functions
$k(z)\equiv K$, $K\in [0,E_{\mathcal{R}}]$.
\end{thm}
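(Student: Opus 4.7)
The plan is to derive formula~(\ref{f3_20}) first for the canonical (in-$H$) extensions, where it reduces to an elementary Krein-type resolvent identity with a constant parameter $K$, and then pass to general $\widetilde R_z^c \in \mathcal{R}^c(B)$ by a Schur complement/compression argument. Bijectivity will then follow by algebraically inverting~(\ref{f3_20}) for $k(z)$.

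First I would verify the canonical case. By the segment representation~(\ref{f3_16}), every $\widetilde B \in \mathcal{B}_H(B)$ satisfies $0 \leq \widetilde B - B^\mu \leq C$; since $B^M$ and $B^\mu$ coincide with $B$ on $\mathcal{D}$, the difference $C$ is supported on $\mathcal{R}$, and the hypothesis that $C$ has an inverse in $[\mathcal{R}]$ lets me write $\widetilde B - B^\mu = C^{1/2} K C^{1/2}$ for a unique $K \in [0, E_\mathcal{R}]$. Plugging the ansatz $R_z(\widetilde B) = R^\mu_z - R^\mu_z C^{1/2} F(z) C^{1/2} R^\mu_z$ into $(\widetilde B - z)R_z(\widetilde B) = E_H$ and using the identity $(Q_\mu(z) - E_\mathcal{R})|_\mathcal{R} = C^{1/2} R^\mu_z C^{1/2}|_\mathcal{R}$, a short algebraic manipulation yields $F(z) = K(E_\mathcal{R} + (Q_\mu(z) - E_\mathcal{R}) K)^{-1}$, which is exactly~(\ref{f3_20}) specialized to $k(z) \equiv K$. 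This in turn proves the last assertion of the theorem about constant parameters.

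For general $\widetilde R_z^c$ I would use the dilation picture: $\widetilde R_z^c = P_H^{\widetilde H} R_z(\widetilde B)|_H$ for a self-adjoint contractive extension $\widetilde B \supseteq B$ in some $\widetilde H \supseteq H$. Writing $\widetilde B$ as a block matrix with respect to $H \oplus (\widetilde H \ominus H)$, the Schur complement formula expresses $\widetilde R_z^c$ as the inverse of $T - z - V^*(W - z)^{-1}V$, where $T$ is a canonical self-adjoint contractive extension of $B$ in $H$ and $-V^*(W-z)^{-1}V$ is an operator-valued Herglotz-type function holomorphic off $[-1,1]$. Combining the decomposition $T = B^\mu + C^{1/2} K_0 C^{1/2}$ from the canonical step with this $z$-dependent correction produces~(\ref{f3_20}) for a parameter $k(z)$ assembled from $K_0$, $V$, and $W$. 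Membership $k(z) \in R_\mathcal{R}[-1,1]$ reflects the self-adjoint contractivity of $W$ together with the fact that $Q_\mu(z)$ itself lies in the same class.

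Bijectivity is handled symmetrically. Injectivity is algebraic: $R^\mu_z - \widetilde R_z^c$ uniquely determines $C^{1/2} G(z) C^{1/2}$ with $G(z) = k(z)(E_\mathcal{R} + (Q_\mu(z) - E_\mathcal{R})k(z))^{-1}$, and since $C$ is invertible in $[\mathcal{R}]$, $k(z)$ can be recovered by inverting the mapping $k \mapsto k(E_\mathcal{R} + (Q_\mu(z)-E_\mathcal{R})k)^{-1}$ off $[-1,1]$. The step I expect to be the main obstacle is surjectivity: given an arbitrary $k(z) \in R_\mathcal{R}[-1,1]$ one must build a Hilbert space $\widetilde H \supseteq H$ and a self-adjoint contractive extension $\widetilde B \supseteq B$ whose compressed resolvent realizes~(\ref{f3_20}). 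The standard route is to invoke the special integral representation of functions in $R_\mathcal{R}[-1,1]$ alluded to just before the theorem: the representing operator measure supplies the auxiliary space $\widetilde H \ominus H$, and multiplication by the independent variable there furnishes $\widetilde B$. Checking that $\widetilde B$ is genuinely a self-adjoint contractive extension of $B$ and that its compressed resolvent reproduces the prescribed $k(z)$ is the technical heart of the proof.
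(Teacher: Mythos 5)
The first thing to note is that the paper contains no proof of this statement: Theorem~\ref{t3_2} is imported verbatim from Krein and Ovcharenko \cite[p. 1053]{cit_400_KO} and used as an external ingredient, so there is no internal argument to compare yours against. Judged on its own terms, your outline follows the natural route, and the canonical step is correct: with $C$ boundedly invertible on $\mathcal{R}$ one has $\widetilde B - B^\mu = C^{1/2}KC^{1/2}$ for a unique $K\in[0,E_{\mathcal{R}}]$, and the second resolvent identity together with $Q_\mu(z)-E_{\mathcal{R}} = C^{1/2}R^\mu_z C^{1/2}|_{\mathcal{R}}$ yields~(\ref{f3_20}) with $k\equiv K$, hence also the final assertion about constant parameters.

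The two steps you defer are, however, genuine gaps rather than routine verifications. In the Schur-complement step, the membership $k(z)\in R_{\mathcal{R}}[-1,1]$ does not follow merely from ``the self-adjoint contractivity of $W$'': the Herglotz inequality $\mathop{\rm Im}\nolimits k(z)/\mathop{\rm Im}\nolimits z\le 0$ does, but the two-sided bound $0\le k(z)\le E_{\mathcal{R}}$ for real $z\notin[-1,1]$ encodes the contractivity of the \emph{whole} block operator $\widetilde B$ — one must show $B^\mu\le T - V^*(W-z)^{-1}V\le B^M$ on $\mathcal{R}$, which requires the operator inequalities coupling $T$, $V$ and $W$ that come from $-E_{\widetilde H}\le\widetilde B\le E_{\widetilde H}$, and this is exactly where complete indeterminacy and the invertibility of $C$ do real work. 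Likewise the bounded invertibility of $E_{\mathcal{R}}+(Q_\mu(z)-E_{\mathcal{R}})k(z)$ for every $z\notin[-1,1]$ and every $k$ in the class is assumed, not proved. Finally, surjectivity — constructing, for an arbitrary $k\in R_{\mathcal{R}}[-1,1]$, a space $\widetilde H$ and a self-adjoint contractive $\widetilde B\supseteq B$ whose compressed resolvent realizes~(\ref{f3_20}) — is only named; the Naimark-type dilation built from the integral representation of the class is precisely the substance of the Krein--Ovcharenko theorem and cannot be waved through. Since the paper itself simply cites this result, the appropriate course here is to do the same rather than to reprove it.
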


Let $A$ be an arbitrary non-negative symmetric operator in a Hilbert space $H$, $\overline{D(A)} = H$.
We are going to obtain a formula for the generalized $\Pi$-resolvents of $A$, by
virtue of Theorem~\ref{t3_2}. Set
\begin{equation}
\label{f3_21}
T = (E_H - A)(E_H + A)^{-1} = -E_H + 2(E_H+A)^{-1},\qquad D(T) = (A+E_H) D(A).
\end{equation}
Then
\begin{equation}
\label{f3_22}
A = (E_H - T)(E_H + T)^{-1} = -E_H + 2(E_H+T)^{-1},\qquad D(A) = (T+E_H) D(T).
\end{equation}
The latter transformations were introduced and intensively studied by Krein~\cite{cit_300_Kr}.
The  operator $T$ is a Hermitian contraction in $H$. In fact, for an arbitrary
$h = (A+E_H)f$, $f\in D(A)$ we may write
$$ \| Th \|_H^2 = \| (-E_H + 2(E_H+A)^{-1})(A+E_H)f \|_H^2 =
\| -Af+f \|_H^2 $$
$$ = \| Af \|_H^2 + \|f\|_H^2 - 2(Af,f)_H \leq \| Af \|_H^2 + \|f\|_H^2 + 2(Af,f)_H = \| h \|_H^2. $$
Let $\widetilde A\supseteq A$ be a non-negative self-adjoint extension of $A$ in a Hilbert space
$\widetilde H\supseteq H$. Then the operator
\begin{equation}
\label{f3_23}
\widetilde T = (E_{\widetilde H} - \widetilde A)(E_{\widetilde H} + \widetilde A)^{-1} =
-E_{\widetilde H} + 2(E_{\widetilde H}+ \widetilde A)^{-1},\qquad D(\widetilde T) = (\widetilde A+E_{\widetilde H})
D(\widetilde A),
\end{equation}
is a self-adjoint contraction $\widetilde T\supseteq T$ in $\widetilde H$, and
\begin{equation}
\label{f3_24}
\widetilde A = (E_{\widetilde H} - \widetilde T)(E_{\widetilde H} + \widetilde T)^{-1} =
-E_{\widetilde H} + 2(E_{\widetilde H}+ \widetilde T)^{-1},\qquad D(\widetilde A) = (\widetilde T+E_{\widetilde H})
D(\widetilde T).
\end{equation}
Consider the following fractional linear transformation:
\begin{equation}
\label{f3_25}
z = \frac{1-\lambda}{1+\lambda} = -1 + 2\frac{1}{1+\lambda};\quad
\lambda = \frac{1-z}{1+z} = -1 + 2\frac{1}{1+z}.
\end{equation}
Choose an arbitrary $z\in \mathbb{C}\backslash \mathbb{R}$ and set
$\lambda := \frac{1-z}{1+z}$. Observe that $\lambda\in \mathbb{C}\backslash \mathbb{R}$. Then
$$ R_z(\widetilde T) = (\widetilde T - z E_{\widetilde H})^{-1}
= \left(
-E_{\widetilde H} + 2(E_{\widetilde H} + \widetilde A)^{-1} - \frac{1-\lambda}{1+\lambda} E_{\widetilde H}
\right)^{-1} $$
$$ = \left(
\frac{(-2)}{ 1+\lambda } (E_{\widetilde H} + \widetilde A)(E_{\widetilde H} + \widetilde A)^{-1} +
2(E_{\widetilde H} + \widetilde A)^{-1}
\right)^{-1} $$
$$ = \left(
\left(
\frac{2\lambda}{ 1+\lambda } E_{\widetilde H} - \frac{2}{1+\lambda} \widetilde A
\right)
(E_{\widetilde H} + \widetilde A)^{-1}
\right)^{-1} $$
$$ = -\frac{\lambda+1}{2}
\left(
\left(
\widetilde A - \lambda E_{\widetilde H}
\right)
(E_{\widetilde H} + \widetilde A)^{-1}
\right)^{-1} =
-\frac{\lambda+1}{2} (E_{\widetilde H} + \widetilde A) (\widetilde A - \lambda E_{\widetilde H})^{-1}
$$
$$ = -\frac{ (\lambda+1)^2 }{2} (\widetilde A - \lambda E_{\widetilde H})^{-1} -
\frac{\lambda + 1}{2} E_{\widetilde H}
= -\frac{ (\lambda+1)^2 }{2} R_\lambda (\widetilde A) - \frac{\lambda + 1}{2} E_{\widetilde H}. $$
Therefore
\begin{equation}
\label{f3_26}
R_\lambda(\widetilde A) = -\frac{2}{(\lambda + 1)^2} R_{\frac{1-\lambda}{1+\lambda}}(\widetilde T) -
\frac{1}{\lambda + 1} E_{\widetilde H},\qquad \forall\lambda\in \mathbb{C}\backslash \mathbb{R}.
\end{equation}
Applying the orthogonal projection on $H$, we get
\begin{equation}
\label{f3_27}
\mathbf{R}_\lambda(A) = -\frac{2}{(\lambda + 1)^2} \mathbf{R}_{\frac{1-\lambda}{1+\lambda}}(T) -
\frac{1}{\lambda + 1} E_{H},\qquad \forall\lambda\in \mathbb{C}\backslash \mathbb{R}.
\end{equation}
Here $\mathbf{R}_\lambda(A)$ is the generalized $\Pi$-resolvent corresponding to $\widetilde A$,
and $\mathbf{R}_z(T)$ is the generalized $sc$-resolvent corresponding to $\widetilde T$.
Thus, an arbitrary generalized $\Pi$-resolvent of $A$ can be constructed by a generalized
$sc$-resolvent of $T$ by relation~(\ref{f3_27}).

\noindent
On the other hand, choose an arbitrary $sc$-resolvent $\mathbf{R}_z'(T)$ of $T$. It corresponds
to a self-adjoint contractive extension $\widehat T\supseteq T$ in a  Hilbert space $\widehat H
\supseteq H$. Observe that
$$ \mathop{\rm Ker}\nolimits (E_{\widehat H}+ \widehat T)\perp
R(E_{\widehat H}+ \widehat T)\supseteq R(E_{H}+ T) = D(A), $$
and therefore $\mathop{\rm Ker}\nolimits (E_{\widehat H}+ \widehat T) \perp H$.
We may assume that $H_1 := \mathop{\rm Ker}\nolimits (E_{\widehat H}+ \widehat T) = \{ 0 \}$, since in the
opposite case one may consider the operator $\widehat T$ restricted to $\widehat H\ominus H_1\supseteq H$.
Then we set
\begin{equation}
\label{f3_28}
\widehat A = (E_{\widehat H} - \widehat T)(E_{\widehat H} + \widehat T)^{-1} =
-E_{\widehat H} + 2(E_{\widehat H}+ \widehat T)^{-1},\qquad D(\widehat A) = (\widehat T+E_{\widehat H})
D(\widehat T).
\end{equation}
The operator $\widehat A$ is densely defined since $\widehat A\supseteq A$,
and it is self-adjoint.
For an arbitrary $u\in D(\widehat T)$ we may write
$$ (\widehat A (\widehat T + E_{\widehat H}) u, (\widehat T + E_{\widehat H}) u)_{\widehat H} =
(-\widehat T u + u, \widehat T u + u)_{\widehat H} =
\| u \|_{\widehat H}^2 - \| \widehat T u \|_{\widehat H}^2
\geq 0. $$
Thus, the operator $\widehat A$ is non-negative.
Observe that
\begin{equation}
\label{f3_29}
\widehat T = (E_{\widehat H} - \widehat A)(E_{\widehat H} + \widehat A)^{-1} =
-E_{\widehat H} + 2(E_{\widehat H} + \widehat A)^{-1}.
\end{equation}
Repeating the considerations after relation~(\ref{f3_25}), we obtain that
\begin{equation}
\label{f3_30}
\mathbf{R}_\lambda'(A) = -\frac{2}{(\lambda + 1)^2} \mathbf{R}_{\frac{1-\lambda}{1+\lambda}}'(T) -
\frac{1}{\lambda + 1} E_{H},\qquad \forall\lambda\in \mathbb{C}\backslash \mathbb{R},
\end{equation}
gives a generalized $\Pi$-resolvent of $A$ (corresponding to $\widehat A$).

\noindent
Consequently, the relation~(\ref{f3_27}) establishes a bijective correspondence between the set
of all $sc$-resolvents of $T$ and the set of all $\Pi$-resolvents of $A$.
It is not hard to see that the canonical $sc$-resolvents are related to the canonical $\Pi$-resolvents.

They say that for the operator $A$ it takes place a {\bf completely indeterminate case},
if for the corresponding operator $T$ it takes place the completely indeterminate case~\cite{cit_3000_KO}.

\noindent
It is known that all self-adjoint contractive extensions of $T$ are extensions of
the extended operator $T_e$ defined by~(\ref{f3_17})~\cite[Theorem 1.4]{cit_400_KO}.
Set
\begin{equation}
\label{f3_31}
A_e = (E_{H} - T_e)(E_{H} + T_e)^{-1} =
-E_{H} + 2(E_{H}+ T_e)^{-1},\ D(A_e) = (T_e+E_{H})D(T_e).
\end{equation}
It is easily seen that the above operator $\widetilde A$ is an extension of $A_e$.
Therefore the sets of generalized $\Pi$-resolvents for $A$ and for $A_e$ coincide.

\begin{thm}
\label{t3_3}
Let $A$ be a non-negative symmetric operator in a Hilbert space $H$, $\overline{D(A)} = H$.
Suppose that for $A$ it takes place the completely indeterminate case.
Let $T$ be given by~(\ref{f3_21}); $\mathcal{D} = D(T)$, $\mathcal{R} = H\ominus \mathcal{D}$.
Suppose that the corresponding operator $C = T^M-T^\mu$, as an operator in $\mathcal{R}$,
has an inverse in $[\mathcal{R}]$.
Then the following equality:
$$ \mathbf{R}_\lambda (A) = -\frac{2}{(\lambda+1)^2} R^\mu_{ \frac{1-\lambda}{1+\lambda} } - \frac{1}{\lambda+1} E_H $$
\begin{equation}
\label{f3_32}
+ \frac{2}{(\lambda+1)^2} R^\mu_{ \frac{1-\lambda}{1+\lambda} }
C^{\frac{1}{2}} \mathbf{k}(\lambda) \left(
E_\mathcal{R} + ( \mathbf{Q}_\mu(\lambda)-E_\mathcal{R} ) \mathbf{k}(\lambda)
\right)^{-1}
C^{\frac{1}{2}} R^\mu_{ \frac{1-\lambda}{1+\lambda} },
\end{equation}
where
$\mathbf{Q}_\mu (\lambda) = Q_\mu \left( \frac{1-\lambda}{1+\lambda} \right)$,
$\mathbf{k}(\lambda) = k\left( \frac{1-\lambda}{1+\lambda} \right)$;
$k(\cdot)\in R_{\mathcal{R}}[-1,1]$,
establishes a bijective correspondence between the set $R_{\mathcal{R}}[-1,1]$ and the set
of all generalized $\Pi$-resolvents of $A$.
Here $Q_\mu$ is defined by~(\ref{f3_19}) for $T$, $R^\mu_z = (T^\mu - zE_H)^{-1}$, and
$\mathbf{R}_\lambda (A)$ is a generalized $\Pi$-resolvent of $A$.

Moreover, the canonical resolvents correspond in~(\ref{f3_32}) to the constant functions
$k(z)\equiv K$, $K\in [0,E_{\mathcal{R}}]$.
\end{thm}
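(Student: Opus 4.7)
The plan is to obtain Theorem~\ref{t3_3} by composing two bijections already in hand: the Cayley-type correspondence between generalized $sc$-resolvents of $T$ and generalized $\Pi$-resolvents of $A$ established in the discussion around~(\ref{f3_27}), and the Krein--Ovcharenko parametrization (Theorem~\ref{t3_2}) of generalized $sc$-resolvents of the Hermitian contraction $T$.

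First I would record the preparatory observations. By the definition of the completely indeterminate case for $A$ given just before the theorem, the operator $T$ defined by~(\ref{f3_21}) is in the completely indeterminate case, and by hypothesis the associated operator $C = T^M - T^\mu$ is invertible in $[\mathcal{R}]$. Thus all assumptions of Theorem~\ref{t3_2} are satisfied for $T$, so that formula~(\ref{f3_20}) furnishes a bijection $k(\cdot)\in R_{\mathcal{R}}[-1,1]\ \leftrightarrow\ \widetilde R_z^c\in \mathcal{R}^c(T)$. Separately, the derivation of~(\ref{f3_27}) together with the converse argument involving~(\ref{f3_28})--(\ref{f3_30}) shows that the map $\mathbf{R}_z(T)\mapsto \mathbf{R}_\lambda(A)$ with $\lambda=\frac{1-z}{1+z}$ is a bijection between all generalized $sc$-resolvents of $T$ and all generalized $\Pi$-resolvents of $A$.

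Next I would simply substitute. Given $k(\cdot)\in R_{\mathcal{R}}[-1,1]$, set $z=\frac{1-\lambda}{1+\lambda}$ in~(\ref{f3_20}), obtaining the generalized $sc$-resolvent $\mathbf{R}_z(T)$, and then plug this into~(\ref{f3_27}). After writing $\mathbf{Q}_\mu(\lambda) := Q_\mu\bigl(\tfrac{1-\lambda}{1+\lambda}\bigr)$ and $\mathbf{k}(\lambda) := k\bigl(\tfrac{1-\lambda}{1+\lambda}\bigr)$, and noting that $R^\mu_{\frac{1-\lambda}{1+\lambda}}$ acts in $H$ but the middle factor $C^{1/2}(\cdots)^{-1}C^{1/2}$ is of the same form as in~(\ref{f3_20}), the resulting expression is exactly~(\ref{f3_32}). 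Since the map $\lambda\mapsto\frac{1-\lambda}{1+\lambda}$ is an involution of $\mathbb{C}\setminus\mathbb{R}$, and since each of the two steps (the Cayley-type step and~(\ref{f3_20})) is a bijection, the composition is a bijection between $R_{\mathcal{R}}[-1,1]$ and the generalized $\Pi$-resolvents of $A$.

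For the final claim about canonical resolvents, I would invoke two facts separately: by the last sentence of Theorem~\ref{t3_2} the constant parameters $k(z)\equiv K$, $K\in[0,E_{\mathcal{R}}]$, correspond precisely to the canonical $sc$-resolvents of $T$; and by the remark just after~(\ref{f3_30}) the Cayley correspondence~(\ref{f3_27}) sends canonical $sc$-resolvents of $T$ to canonical $\Pi$-resolvents of $A$ and vice versa. Composing these matches the constant functions in $R_{\mathcal{R}}[-1,1]$ with the canonical $\Pi$-resolvents of $A$ in~(\ref{f3_32}). The main (in fact, the only non-bookkeeping) subtlety to watch is that the hypotheses transfer correctly under the Cayley transform: one must be sure that ``completely indeterminate for $A$'' is defined so as to be equivalent to the same property for $T$ (which is exactly how it is introduced above), and that the reduction $T\to T_e$, $A\to A_e$ in~(\ref{f3_17}) and~(\ref{f3_31}) does not alter the sets of generalized resolvents, so that Theorem~\ref{t3_2} may be applied without loss of generality.
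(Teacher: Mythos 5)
Your proposal is correct and follows essentially the same route as the paper: the authors' proof of Theorem~\ref{t3_3} is precisely the composition of the Cayley-type bijection between generalized $sc$-resolvents of $T$ and generalized $\Pi$-resolvents of $A$ from formula~(\ref{f3_27}) (with the converse via~(\ref{f3_28})--(\ref{f3_30})) with the Krein--Ovcharenko parametrization of Theorem~\ref{t3_2}, exactly as you describe. Your additional remarks on the canonical resolvents and on the transfer of the completely indeterminate hypothesis are consistent with the paper's preceding discussion.
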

\begin{proof}
It follows directly from the preceding considerations, formula~(\ref{f3_27}) and by applying Theorem~\ref{t3_2}.

\end{proof}

Let the strong matrix Stieltjes moment problem be given and conditions~(\ref{f1_3}) hold.
Consider an arbitrary Hilbert space $H$ and a sequence of elements $\{ x_n \}_{n\in \mathbb{Z}}$ in $H$,
such that relation~(\ref{f2_3}) holds. Let $A = \overline{A_0}$, where the operator $A_0$ is
defined by~(\ref{f2_6}). Denote $L_N = \mathop{\rm Lin}\nolimits \{ x_k \}_{k=0}^{N-1}$.
Define a linear transformation $G$ from $\mathbb{C}^N$ onto $L_N$ by the following relation:
\begin{equation}
\label{f3_33}
G \vec u_k = x_k,\qquad k=0,1,...,N-1,
\end{equation}
where $\vec u_k = (\delta_{0,k},\delta_{1,k},...,\delta_{N-1,k})$.
\begin{thm}
\label{t3_4}
Let the strong matrix Stieltjes moment problem~(\ref{f1_1}) be given and conditions~(\ref{f1_3}) be satisfied.
Let $\{ x_n \}_{n\in \mathbb{Z}}$ be a sequence of elements of a Hilbert space $H$ such that
relation~(\ref{f2_3}) holds. Let $A = \overline{A_0}$, where the operator $A_0$ is
defined by relation~(\ref{f2_6}). Let $T = -E_H + 2(E_H+A)^{-1}$. The following statements are true:

\begin{itemize}

\item[1)] If $T^\mu = T^M$, then the moment problem~(\ref{f1_1}) has a unique solution. This solution
is given by
\begin{equation}
\label{f3_34}
M(t) = (m_{j,n}(t))_{j,n=0}^{N-1},\quad
m_{j,n}(t) = ( E^\mu_t x_j,x_n )_H,\
0\leq j,n\leq N-1,
\end{equation}
where $\{ E^\mu_t \}$ is the left-continuous orthogonal resolution of unity of the operator
$A^\mu = -E_H + 2(E_H+T^\mu)^{-1}$.

\item[2)] If $T^\mu \not= T^M$, define the extended operator $T_e$ by~(\ref{f3_17}); $\mathcal{R}_e =
H\ominus D(T_e)$, $C=T^M-T^\mu$, and
$R^\mu_z = (T^\mu - zE_H)^{-1}$,
$Q_{\mu,e} (z) =
\left.\left( C^{\frac{1}{2}} R^\mu_z C^{\frac{1}{2}} + E_H \right)\right|_{\mathcal{R}_e}$,
$z\in \mathbb{C}\backslash[-1,1]$.
An arbitrary solution $M(\cdot)$ of the moment problem can be found by the Stieltjes-Perron inversion formula
from the following relation
$$ \int_{ \mathbb{R}_+ } \frac{1}{t - z} d M^T(t) $$
\begin{equation}
\label{f3_35}
= \mathcal{A}(z) - \mathcal{C}(z) \mathbf{k}(z) (E_{ \mathcal{R}_e } + \mathcal{D}(z) \mathbf{k}(z))^{-1}
\mathcal{B}(z),
\end{equation}
where $\mathbf{k}(\lambda) = k\left( \frac{1-\lambda}{1+\lambda} \right)$,
$k(z)\in R_{\mathcal{R}_e}[-1,1]$, and on the right-hand side one means the matrix of the corresponding
operator in $\mathbb{C}^N$.
Here $\mathcal{A}(z),\mathcal{B}(z),
\mathcal{C}(z),\mathcal{D}(z)$ are analytic operator-valued functions given by
\begin{equation}
\label{f3_36}
\mathcal{A}(z) = -\frac{2}{(\lambda+1)^2} G^* R^\mu_{ \frac{1-\lambda}{1+\lambda} }G - \frac{1}{\lambda+1} G^*G:\
\mathbb{C}^N \rightarrow \mathbb{C}^N,
\end{equation}
\begin{equation}
\label{f3_37}
\mathcal{B}(z) = C^{\frac{1}{2}} R^\mu_{ \frac{1-\lambda}{1+\lambda} } G:\
\mathbb{C}^N \rightarrow \mathcal{R}_e,
\end{equation}
\begin{equation}
\label{f3_38}
\mathcal{C}(z) = \frac{2}{(\lambda+1)^2} G^* R^\mu_{ \frac{1-\lambda}{1+\lambda} } C^{\frac{1}{2}}:\
\mathcal{R}_e \rightarrow \mathbb{C}^N,
\end{equation}
\begin{equation}
\label{f3_39}
\mathcal{D}(z) = Q_{\mu,e}\left( \frac{1-\lambda}{1+\lambda} \right)
- E_{ \mathcal{R}_e }:\ \mathcal{R}_e \rightarrow \mathcal{R}_e.
\end{equation}
Moreover, the correspondence between all solutions of the moment problem and $k(z)\in R_{\mathcal{R}_e}[-1,1]$
is bijective.
\end{itemize}

\end{thm}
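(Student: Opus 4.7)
The plan is to combine Theorem~\ref{t3_1}, which sets up a bijection between left-continuous $\Pi$-spectral functions of $A$ and solutions of the moment problem~(\ref{f1_1}), with an explicit parametrization of the generalized $\Pi$-resolvents of $A$ produced by Theorem~\ref{t3_3}. The bridge between resolvents and spectral functions is the Stieltjes--Perron inversion formula applied entry by entry to the $N\times N$ matrix with $(k,j)$ entry $(\mathbf{E}_t x_k,x_j)_H$, which is precisely the matrix $M(t)$ of~(\ref{f3_2}).

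For part~(1), I would first observe that $T^\mu = T^M$ forces $C = T^M - T^\mu = 0$, hence $\mathcal{R}_0 = \mathcal{R}$ and the extended operator $T_e$ of~(\ref{f3_17}) coincides with $T^\mu$ on all of $H$. Consequently $A_e = -E_H + 2(E_H + T_e)^{-1} = A^\mu$ is self-adjoint in $H$. Since the sets of generalized $\Pi$-resolvents of $A$ and of $A_e$ coincide (by the remark immediately preceding Theorem~\ref{t3_3}), and since any non-negative self-adjoint extension of the self-adjoint operator $A^\mu$ to a larger Hilbert space must leave $H$ invariant --- so that the compression to $H$ of its resolution of unity equals $\{E_t^\mu\}$ --- the operator $A$ admits exactly one left-continuous $\Pi$-spectral function, namely $\{E_t^\mu\}$. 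Theorem~\ref{t3_1} then delivers the unique solution~(\ref{f3_34}).

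For part~(2), I would reduce to the completely indeterminate case by replacing $A$ with $A_e$ (the sets of generalized $\Pi$-resolvents still coincide). Theorem~\ref{t3_3} then supplies, for every $k(z)\in R_{\mathcal{R}_e}[-1,1]$, a generalized $\Pi$-resolvent $\mathbf{R}_\lambda(A)$, and every such resolvent arises in this way. Compressing the resulting operator identity to the finite-dimensional subspace $L_N$ through the map $G:\mathbb{C}^N\to L_N$ --- concretely, forming $G^*\mathbf{R}_\lambda(A) G:\mathbb{C}^N\to\mathbb{C}^N$, whose $(k,j)$ entry is $(\mathbf{R}_\lambda(A) x_j,x_k)_H$ --- and recognising the four resulting operator-valued coefficients as $\mathcal{A}(z)$, $\mathcal{B}(z)$, $\mathcal{C}(z)$, $\mathcal{D}(z)$ of~(\ref{f3_36})--(\ref{f3_39}) yields the right-hand side of~(\ref{f3_35}). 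The left-hand side is furnished by Stieltjes--Perron: the $(k,j)$ entry of $\int_{\mathbb{R}_+}(t-z)^{-1}\,dM^T(t)$ is $(\mathbf{R}_z x_j,x_k)_H$. Bijectivity of the assignment $k(z)\leftrightarrow M$ is then inherited from the two bijections already available (Theorems~\ref{t3_1} and~\ref{t3_3}), since Stieltjes--Perron inversion is itself injective on left-continuous non-decreasing matrix functions $M$ with $M(0)=0$.

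The main technical issues I expect are, first, confirming that in the present context the operator $C$ restricted to $\mathcal{R}_e$ does admit an inverse in $[\mathcal{R}_e]$, which is the hypothesis under which Theorem~\ref{t3_3} was stated; and second, the careful but routine bookkeeping needed to match the compressed identity with~(\ref{f3_35}), including the transposition that intervenes between $(\mathbf{R}_z x_j, x_k)_H$ and the $(k,j)$ entry of the Cauchy transform of $M^T$, and the fractional-linear substitution $z=(1-\lambda)/(1+\lambda)$ that converts $sc$-data on $[-1,1]$ into $\Pi$-data on $\mathbb{R}_+$.
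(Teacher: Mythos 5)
Your proposal matches the paper's proof essentially step for step: part (1) is settled by the uniqueness of the sc-/$\Pi$-spectral function when $T^\mu=T^M$, and part (2) by passing to $A_e$, applying Theorem~\ref{t3_3}, compressing with $G$, and combining Theorem~\ref{t3_1} with Stieltjes--Perron inversion and relation~(\ref{f3_1}) for the bijectivity. The one hypothesis you flag but leave open --- invertibility of $C|_{\mathcal{R}_e}$ in $[\mathcal{R}_e]$ --- is likewise left implicit in the paper; it holds because $C$ is injective on $\mathcal{R}_e$ in the completely indeterminate case and $\dim\mathcal{R}_e\leq N$ by the deficiency-index bound obtained in the proof of Theorem~\ref{t3_1}.
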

\begin{proof}
Consider the case 1). In this case all self-adjoint contractions
$\widetilde T\supseteq T$ in a Hilbert space $\widetilde H\supseteq H$ coincide on $H$ with $T^\mu$,
see~\cite[p. 1039]{cit_400_KO}. Thus, the corresponding sc-spectral functions are
spectral functions of the self-adjoint operator $T^\mu$, as well. However, a self-adjoint operator
has a unique (normalized) spectral function. Thus, a set of sc-spectral functions of $T$ consists of
a unique element. Therefore the set of $\Pi$-resolvents of $A$ consists of a
unique element, as well. This element is the spectral function of $A^\mu$.

\noindent
Consider the case 2).
By Theorem~\ref{t3_2} and relation~(\ref{f3_1}) it follows that an arbitrary solution $M(t) =
(m_{j,n}(t))_{j,n=0}^{N-1}$ of the moment problem~(\ref{f1_1}) can be found from the following relation:
$$ \int_{ \mathbb{R}_+ } \frac{1}{t - z} d m_{j,n}(t) =
(\mathbf{R}_z x_j,x_n)_H,\quad 0\leq j,n\leq N-1;\ z\in \mathbb{C}\backslash \mathbb{R}, $$
where $\mathbf{R}_z$ is a generalized $\Pi$-resolvent of the operator $A$. Moreover, the correspondence between
the set of all generalized $\Pi$-resolvents of $A$ (which is equal to the set of
all generalized $\Pi$-resolvents of $A_e$) and the set of all solutions of the moment problem is bijective.
Notice that $T^\mu = T_e^\mu$ and $T^M = T_e^M$. By Theorem~\ref{t3_3} (applied to the operator $A_e$)
we may rewrite the latter relation in the following form:
$$ \int_{ \mathbb{R}_+ } \frac{1}{t - z} d m_{j,n}(t)
= \left( \left\{
-\frac{2}{(\lambda+1)^2} R^\mu_{ \frac{1-\lambda}{1+\lambda} } - \frac{1}{\lambda+1} E_H \right.\right. $$
$$ \left. \left.
+ \frac{2}{(\lambda+1)^2} R^\mu_{ \frac{1-\lambda}{1+\lambda} }
C^{\frac{1}{2}} \mathbf{k}(\lambda) \left(
E_{ \mathcal{R}_e } + ( \mathbf{Q}_{\mu,e}(\lambda)-E_{ \mathcal{R}_e } ) \mathbf{k}(\lambda)
\right)^{-1}
C^{\frac{1}{2}} R^\mu_{ \frac{1-\lambda}{1+\lambda} }
 \right\} x_j,x_n
 \right)_H, $$
where $\mathbf{k}(\lambda) = k\left( \frac{1-\lambda}{1+\lambda} \right)$, $k(z)\in R_{\mathcal{R}_e}[-1,1]$,
$\mathbf{Q}_{\mu,e}(\lambda) = Q_{\mu,e}\left( \frac{1-\lambda}{1+\lambda} \right)$.
Then
$$ \int_{ \mathbb{R}_+ } \frac{1}{t - z} d m_{j,n}(t)
= \left( \left\{
-\frac{2}{(\lambda+1)^2} G^* R^\mu_{ \frac{1-\lambda}{1+\lambda} }G - \frac{1}{\lambda+1} G^*G
+ \frac{2}{(\lambda+1)^2} G^*
\right.\right. $$
$$ \left. \left.
* R^\mu_{ \frac{1-\lambda}{1+\lambda} } C^{\frac{1}{2}} \mathbf{k}(\lambda) \left(
E_{ \mathcal{R}_e } + ( \mathbf{Q}_{\mu,e}(\lambda)-E_{ \mathcal{R}_e } ) \mathbf{k}(\lambda)
\right)^{-1}
C^{\frac{1}{2}} R^\mu_{ \frac{1-\lambda}{1+\lambda} } G
 \right\} u_j,u_n
 \right)_{\mathbb{C}^N}. $$
Introducing functions
$\mathcal{A}(z),\mathcal{B}(z), \mathcal{C}(z),\mathcal{D}(z)$ by formulas~(\ref{f3_36})-(\ref{f3_39})
one easily obtains relation~(\ref{f3_35}).
\end{proof}

\begin{thm}
\label{t3_5}
Let the strong matrix Stieltjes moment problem~(\ref{f1_1}) be given and conditions~(\ref{f1_3}) be satisfied.
Let $\{ x_n \}_{n\in \mathbb{Z}}$ be a sequence of elements of a Hilbert space $H$ such that
relation~(\ref{f2_3}) holds. Let $A = \overline{A_0}$, where the operator $A_0$ is
defined by relation~(\ref{f2_6}).
The moment problem is determinate if and only if
$T^\mu = T^M$, where $T^\mu$,$T^M$ are the extremal extensions of the operator
$T = -E_H + 2(E_H+A)^{-1}$.
\end{thm}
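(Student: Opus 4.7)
The plan is to derive both directions directly from the bijective correspondences already assembled in this section, namely Theorem~\ref{t3_1} (between $\Pi$-spectral functions of $A$ and solutions of the moment problem) and the Cayley-type relation~(\ref{f3_27}) together with the fact, noted just after~(\ref{f3_27}), that it restricts to a bijection between canonical $sc$-resolvents of $T$ and canonical $\Pi$-resolvents of $A$.

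For the sufficiency direction ($T^\mu = T^M$ implies determinate), I would simply invoke part~1) of Theorem~\ref{t3_4}: when $T^\mu = T^M$, every self-adjoint contractive extension of $T$ in any Hilbert space $\widetilde H \supseteq H$ coincides with this common operator on $H$, so the set of $sc$-spectral functions of $T$ is a singleton; transferred through~(\ref{f3_27}) and Theorem~\ref{t3_1}, this yields the unique solution~(\ref{f3_34}).

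For the necessity direction, assume $T^\mu \neq T^M$; I would exhibit two distinct canonical solutions of the moment problem. The extremal operators $T^\mu$ and $T^M$ are themselves distinct self-adjoint contractive extensions of $T$ inside $H$, so their left-continuous spectral resolutions $E^\mu_t$ and $E^M_t$ are distinct canonical $sc$-spectral functions of $T$. Passing through the inverse Cayley transform~(\ref{f3_22})---after restricting, if necessary, to the orthogonal complements of the kernels $\mathop{\rm Ker}\nolimits(E_H + T^\mu)$ and $\mathop{\rm Ker}\nolimits(E_H + T^M)$ as in the discussion preceding~(\ref{f3_28})---one obtains distinct non-negative self-adjoint operators $A^\mu$ and $A^M$, whose associated canonical $\Pi$-spectral functions are distinct by the bijectivity of~(\ref{f3_27}). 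Theorem~\ref{t3_1} then supplies two distinct matrix-valued solutions of the moment problem via formula~(\ref{f3_2}), and so the problem is indeterminate.

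The only point requiring any real care is the chain of bijections used in the necessity direction, where distinctness has to be tracked through the Cayley transform~(\ref{f3_21})--(\ref{f3_22}), through the operator-valued relation~(\ref{f3_27}), and finally through the injectivity half of Theorem~\ref{t3_1}; no new operator-theoretic constructions or analytic estimates beyond those already assembled in the preceding development are needed.
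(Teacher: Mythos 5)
Your proof is correct, and the sufficiency half coincides with the paper's (both simply invoke part~1) of Theorem~\ref{t3_4}). For the necessity half the paper takes a slightly different route: it stays inside the parameterization of part~2) of Theorem~\ref{t3_4}, observing that $T^\mu\neq T^M$ forces $\dim\mathcal{R}_e>0$, so that the class $R_{\mathcal{R}_e}[-1,1]$ contains at least the two constant functions $k\equiv 0$ and $k\equiv E_{\mathcal{R}_e}$, and the bijectivity of~(\ref{f3_35}) then produces two distinct solutions. You exhibit what are in substance the same two witnesses (the canonical solutions generated by $A^\mu$ and $A^M$), but you certify their distinctness by tracking injectivity through the Cayley transform and then through the uniqueness half of Theorem~\ref{t3_1}, rather than through the resolvent formula. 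This is a legitimate and arguably more robust alternative: it bypasses the Krein--Ovcharenko formula~(\ref{f3_20}) entirely, and hence does not need the hypothesis, required in Theorems~\ref{t3_2} and~\ref{t3_3} and tacitly carried into part~2) of Theorem~\ref{t3_4}, that $C=T^M-T^\mu$ have a bounded inverse on $\mathcal{R}_e$. One minor remark: the precaution of restricting to the orthogonal complement of $\mathop{\rm Ker}\nolimits(E_H+T^\mu)$ and $\mathop{\rm Ker}\nolimits(E_H+T^M)$ is vacuous here, since for a canonical extension $\widehat T$ inside $H$ the paper's own observation that $\mathop{\rm Ker}\nolimits(E_H+\widehat T)\perp R(E_H+T)=D(A)$, with $D(A)$ dense in $H$, already forces these kernels to be trivial.
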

\begin{proof}
The sufficiency follows from Statement~1 of Theorem~\ref{t3_4}.
The necessity follows from Statement~2 of Theorem~\ref{t3_4}, if we take into account that the class
$R_{\mathcal{R}_e}([-1,1])$, where $\dim \mathcal{R}_e >0$, has at least two different elements.
In fact, from the definition of the class $R_{\mathcal{R}_e}([-1,1])$ it follows that $k_1(z)\equiv 0$, and
$k_1(z)\equiv E_{\mathcal{R}_e}$, belong to $R_{\mathcal{R}_e}([-1,1])$.
\end{proof}

\begin{exm}
Consider the moment problem~(\ref{f1_1}) with $N=2$ and
$$ S_n = \left(
\begin{array}{cc} 1 & \frac{3}{ \sqrt{10} } \\
\frac{3}{ \sqrt{10} } & 1 \end{array}
\right),\qquad n\in \mathbb{Z}. $$
In this case we have
$$ \Gamma = (S_{i+j})_{i,j=-\infty}^\infty = (\gamma_{n,m})_{n,m=-\infty}^\infty, $$
where
$$ \gamma_{2k,2l} = \gamma_{2k+1,2l+1} = 1,\ \gamma_{2k,2l+1} = \gamma_{2k+1,2l} = \frac{3}{ \sqrt{10} },\qquad
k,l\in \mathbb{Z}. $$
Consider the space $\mathbb{C}^2$ and elements $u_0,u_1\in \mathbb{C}^2$:
$$ u_0 = \frac{1}{ \sqrt{2} } (1,1),\quad  u_1 = \frac{1}{ \sqrt{5} } (1,2). $$
Set
$$ x_{2k} = u_0,\quad x_{2k+1} = u_1,\qquad k\in \mathbb{Z}. $$
Then relation~(\ref{f2_3}) holds.
Define by~(\ref{f2_6}) the operator $A_0$. In this case $A=A_0 = E_{\mathbb{C}^2}$.
Therefore the operators $A$ and $T=-E_H + 2(E_H+A)^{-1}$ are self-adjoint and have  unique spectral functions.
Hence, $T^M=T^\mu$, and by Theorem~\ref{t3_5} we conclude that
the moment problem has a unique solution.
By Theorem~\ref{t3_1} it has the following form
$$ M(\lambda) = (m_{k,j} (\lambda))_{k,j=0}^{N-1},\quad
m_{k,j} (\lambda) = ( \mathbf E_\lambda x_k, x_j)_H,
$$
where $\mathbf E_\lambda$ is the left-continuous spectral function of the operator $E_{\mathbb{C}^2}$.
Consequently, the matrix function $M(t)$ is equal to $0$, for $t\leq 1$,
and $M(t) = \left(
\begin{array}{cc} 1 & \frac{3}{ \sqrt{10} } \\
\frac{3}{ \sqrt{10} } & 1 \end{array}
\right)$, for $t>1$.

\end{exm}

\begin{center}
{\large\bf The strong matrix Stieltjes moment problem.}
\end{center}
\begin{center}
{\bf A.E. Choque Rivero, S.M. Zagorodnyuk}
\end{center}

In this paper we study the strong matrix Stieltjes moment problem.
We obtain necessary and sufficient conditions for its
solvability. An analytic description of all
solutions of the moment problem is derived.
Necessary and sufficient conditions for the determinateness of the moment problem
are given.

}
\end{document}